\theoremstyle{plain}
\newtheorem{theorem}{Theorem}[section]
\newtheorem{lemma}{Lemma}[section]
\theoremstyle{definition}
\newtheorem{remark}{\textnormal{\textbf{Remark}}}
\numberwithin{equation}{section}
\DeclareMathOperator{\Gal}{Gal}
\newcommand{\Z}{\mathbb{Z}}
\newcommand{\C}{\mathbb{C}}
\newcommand{\mcalO}{\mathcal{O}}
\newcommand{\mcalS}{\mathcal{S}}
\begin{document}
\title[A family of quartic Thue equations over function fields] {ELEMENTARY RESOLUTION OF A FAMILY OF QUARTIC THUE EQUATIONS OVER FUNCTION FIELDS}
\author[CLEMENS FUCHS \and ANA JURASI\'{C} \and ROLAND PAULIN]{CLEMENS FUCHS*$^{,\dag}$ \and ANA JURASI\'{C}** \and ROLAND PAULIN*}

\newcommand{\acr}{\newline\indent}

\address{\llap{*\,}Department of Mathematics\acr
University of Salzburg\acr
Hellbrunnerstr. 34/I\acr
5020 Salzburg\acr
AUSTRIA}
\email{clemens.fuchs@sbg.ac.at, roland.paulin@sbg.ac.at}

\address{\llap{**\,}Department of Mathematics\acr
University of Rijeka\acr
Radmile Matej\v{c}i\'c 2\acr
51000 Rijeka\acr
CROATIA}
\email{ajurasic@math.uniri.hr}

\thanks{$^\dag$Corresponding author: {\sf clemens.fuchs@sbg.ac.at}, phone: +43(662)80445301, fax: +43(662)8044137.}

\subjclass[2010]{11D25}
\keywords{Thue equation, families of Diophantine equations, function fields, determination of units}

\begin{abstract}
We consider and completely solve the parametrized family of Thue equations \begin{eqnarray*}X(X-Y)(X+Y)(X-\lambda Y)+Y^4=\xi,\end{eqnarray*} where the solutions $x,y$ come from the ring $\C[T]$, the parameter $\lambda\in\C[T]$ is some non-constant polynomial and $0\neq\xi\in\C$. It is a function field analogue of the family solved by Mignotte, Peth\H{o} and Roth in the integer case. A feature of our proof is that we avoid the use of height bounds by considering a smaller relevant ring for which we can determine the units more easily. Because of this, the proof is short and the arguments are very elementary (in particular compared to previous results on parametrized Thue equations over function fields).
\end{abstract}

\maketitle

\section{Introduction}
Let $R$ be a commutative ring, $R^{\times}$ its group of units and $F \in R[X,Y]$ be a binary irreducible form of degree $d \geq 3$.
An equation $F(X,Y)=m$ with $m \in R^{\times}$ is called a Thue equation since Thue proved in 1909 \cite{thue} the finiteness of solutions $x,y \in R$ of such equations for $R = \Z$.
Nowadays, it is known how to solve algorithmically a Thue equation over a ring $R$ that is finitely generated over $\Z$.
The study of Thue equations over function fields started with Gill's paper \cite{gil}.
In the next 50 years several authors such as Schmidt \cite{sch}, Mason \cite{mas,mas1} and Dvornicich and Zannier \cite{zan} considered the problem to determine effectively all solutions of a given Thue equation over some function field.
In contrast to the number field case $R$ is not necessarily finitely generated over $\Z$, so Thue equations over function fields may have infinitely many solutions.
A criterion for the finiteness of solutions of a given Thue equation was shown by Lettl \cite{letl}.

Since 1990, when Thomas \cite{tom} investigated a parametrized family of cubic Thue equations with positive discriminant, several families of parametrized Thue equations $F_\lambda(X,Y)=m$ have been studied (see a survey  \cite{heub} for further references).
Usually, such a family of equations has finitely many families of solutions depending on the parameter $\lambda$ and finitely many sporadic solutions for certain values of  $\lambda$.
This is however not true in general.
It was shown by Lettl in \cite{letl1} that a family of Thue equations can have sporadic solutions for infinitely many values of the parameter $\lambda$.
The first family of Thue equations over a function field was solved by the first author and Ziegler \cite{fz1}.
In \cite{fz2} they went a step further and solved a family where the parameter itself is a polynomial.
Further results were obtained later by Ziegler in \cite{z0,z}.
We mention that the problem for function fields can be viewed as looking for families of solutions parametrized by polynomials resp.\ algebraic functions and this point of view is behind Lettl's result mentioned above.

In this paper we again consider a family, now with degree $d=4$, where the solutions $x,y$ and the parameter $\lambda$ come from the commutative ring $R=\C[T]$ and the right hand side is a unit in $R$.
In the integer case, this family was considered and completely solved by Mignotte, Peth\H{o} and Roth \cite{mpr}.
We prove:
\begin{theorem} \label{Tm1}
Let $\xi \in \C^{\times}$ and $\lambda \in \C[T] \setminus \C$.
Let
\begin{align*}
F(X,Y) &= X(X-Y)(X+Y)(X - \lambda Y) + Y^4 \\
&= X^4 - \lambda X^3 Y - X^2 Y^2 + \lambda X Y^3 + Y^4 \in \C[T][X,Y].
\end{align*}
Then the set of solutions in $\C[T] \times \C[T]$ of the parametric Thue equation $F(X,Y)$ $ =\xi$ is given by
\[
\mcalS = \{(\zeta, 0), (0, \zeta), (\zeta, \zeta), (-\zeta, \zeta), (\zeta \lambda, \zeta), (-\zeta, \zeta \lambda) ; \, \zeta \in \C^{\times}, \, \zeta^4 = \xi\}.
\]
\end{theorem}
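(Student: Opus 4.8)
\section*{Proof proposal}

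The plan is to factor the form over a quadratic extension, reduce the equation to a norm equation in a free rank-two $\C[T]$-algebra, determine the units of that algebra by an elementary degree descent (this is what replaces the height bounds), and finally analyze the resulting Diophantine system.

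\emph{Reduction.} First I would adjoin a root $u$ of $U^2-\lambda U+1$, giving $u_1,u_2$ with $u_1+u_2=\lambda$, $u_1u_2=1$, and record the factorization $F(X,Y)=(X^2-u_1XY-Y^2)(X^2-u_2XY-Y^2)$; equivalently, with $P=x^2-y^2$ and $Q=xy$ one has $F(x,y)=P^2-\lambda PQ+Q^2=N_{R/\C[T]}(P-u_1Q)$, where $R=\C[T][u_1]$. Note $\lambda^2-4$ is never a square in $\C(T)$ for $\lambda\in\C[T]\setminus\C$, since otherwise $u_1u_2=1$ would force $u_1,u_2\in\C[T]^{\times}=\C^{\times}$, contradicting $u_1+u_2=\lambda$; hence $R$ is a genuine free rank-two $\C[T]$-algebra with a nontrivial automorphism. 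Since $R$ is integral over $\C[T]$, an element $w=f+gu_1$ ($f,g\in\C[T]$) lies in $R^{\times}$ iff $N(w)=f^2+\lambda fg+g^2\in\C^{\times}$, and the equation $F(x,y)=\xi$ says precisely that $w:=(x^2-y^2)-u_1xy$ is a unit of $R$ of norm $\xi$.

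\emph{Units (the elementary heart).} Then I would show $R^{\times}=\C^{\times}\cdot\langle u_1\rangle$ by a degree descent. If $f,g\neq 0$ and $f^2+\lambda fg+g^2\in\C^{\times}$, comparing degrees forces $|\deg f-\deg g|=\deg\lambda$ together with a cancellation of the two top-degree terms; replacing $w$ by $wu_1^{\pm1}$ (sign chosen according to which of $f,g$ has larger degree) then strictly lowers $\max(\deg f,\deg g)$. Iterating, one reaches $f=0$ or $g=0$, whence $w\in\C^{\times}\langle u_1\rangle$. Consequently, writing $u_1^k=a_ku_1-a_{k-1}$ with $a_0=0$, $a_1=1$, $a_{k+1}=\lambda a_k-a_{k-1}$ (so $a_{-k}=-a_k$ and $\deg a_k=(|k|-1)\deg\lambda$ for $|k|\geq 1$), every solution gives $x^2-y^2=-ca_{k-1}$ and $xy=-ca_k$ for some $k\in\Z$ and some $c\in\C^{\times}$ with $c^2=\xi$.

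\emph{Finishing.} Eliminating $y$ yields $x^4+ca_{k-1}x^2-c^2a_k^2=0$, so $2x^2=c\bigl(-a_{k-1}\pm\sqrt{V(k)}\,\bigr)$ with $V(k)=a_{k-1}^2+4a_k^2$; in particular $V(k)$ must be a square in $\C[T]$. The crux is that this happens only for $k\in\{-2,0,1\}$. Since $V(k)=(a_{k-1}+2ia_k)(a_{k-1}-2ia_k)$ is a product of two coprime polynomials that are complex conjugates of each other, and since $\lambda(T)-\rho$ is a nonzero constant times a square for at most one $\rho\in\C$ (for two values the corresponding square roots would both divide $\lambda'$, which has degree $\deg\lambda-1$), the polynomial $V(k)(\lambda(T))$ is a square in $\C[T]$ if and only if $a_{k-1}(\lambda)+2ia_k(\lambda)$ is already a square in $\C[\lambda]$; a parity-of-degree count eliminates all even $k\neq 0$ and all $k\leq -3$, while for odd $k\geq 3$ one checks that the double roots of $2i\mu^{2k}+\mu^{2k-1}-\mu-2i$ are forced to be reciprocal and to satisfy $\mu+\mu^{-1}=-i(3k+1)/(2k-1)$, which has absolute value $>1$, so $a_{k-1}(\lambda)+2ia_k(\lambda)$ retains a simple root and is not a square. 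For the three surviving values $k\in\{-2,0,1\}$, solving the system explicitly (taking $\zeta$ with $\zeta^2=\pm c$, so $\zeta^4=\xi$) and treating separately the degenerate cases $x=0$, $y=0$, $x=\pm y$ yields exactly the six families of $\mcalS$; conversely each member of $\mcalS$ satisfies $F(X,Y)=\xi$ by direct substitution. The main obstacle is this last determination of the admissible $k$; the rest is short and self-contained.
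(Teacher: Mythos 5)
Your proposal follows the paper's strategy almost exactly up to the last step: the same factorization $F=(X^2-uXY-Y^2)(X^2-u^{-1}XY-Y^2)$ over $\C[T][u]$, the same elementary degree descent showing that the unit group is $\C^{\times}\cdot\langle u\rangle$, and the same reduction to deciding for which exponents $k$ the polynomial $a_{k-1}^2+4a_k^2$ (a polynomial in $\lambda$) is a square, settled in $\C[X]$ via the coprime conjugate factors $a_{k-1}\pm 2ia_k$. (A small quibble there: ``$\lambda-\rho$ is a constant times a square for at most one $\rho$'' does not by itself exclude the case of exactly one odd-multiplicity root; you also need that the number of odd-multiplicity roots is even, which follows from the even degree of the product, as in the paper's Lemma \ref{lemma:f(lambda) square => f square}.)

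The genuine gap is in the determination of the admissible $k$, which is the heart of the matter. Writing $G_k=a_{k-1}+2ia_k$ as a polynomial in $X$, one has $\deg G_k=k-1$ for $k\ge 2$ and $\deg G_k=-k$ for $k\le -1$, so the parity-of-degree count eliminates exactly the even $k\ge 2$ and the odd $k\le -1$ --- not, as you claim, ``all even $k\neq 0$ and all $k\le -3$.'' Indeed your stated claim would eliminate $k=-2$, which is even and \emph{must} survive, since it produces the non-trivial solutions $(\zeta\lambda,\zeta)$ and $(-\zeta,\zeta\lambda)$ (here $G_{-2}=-(X+i)^2$ is a square). After parity you are therefore left not only with the odd $k\ge 3$ (for which your double-root analysis of $2i\mu^{2k}+\mu^{2k-1}-\mu-2i$ is a plausible but unverified sketch) but also with \emph{all even} $k\le -4$, for which you give no argument whatsoever; note that there is no symmetry reducing negative to positive exponents, since $a_{-k-1}^2+4a_{-k}^2=a_{k+1}^2+4a_k^2\neq a_{k-1}^2+4a_k^2$. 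The paper closes exactly this case by passing to $H_n(X)=(-i)^nG_n(iX)\in\Z[X]$, which satisfies $H_{n+2}=XH_{n+1}+H_n$ and is primitive for $n\neq 1$: then $m\ge 2$ is excluded by the leading coefficient $2$, and $m\le -3$ (necessarily even, hence $H_m$ monic and equal to $h^2$ with $h\in\Z[X]$) is excluded by the Fibonacci evaluation $H_m(1)=-F_{-m-2}<0$. Some device of this kind is needed to make your final step, and hence the whole argument, complete.
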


We mention that all the coordinates of the solutions found in Theorem \ref{Tm1} lie in the smaller ring $\C[\lambda]$. This is indeed what one expects in most cases (this phenomenon is part of the notion ``stably solvable'', which has been introduced by Thomas in \cite{tom} for the integer case). An interesting counterexample, however, was given by Lettl in the paper \cite{letl1} as we have already mentioned above.

The family of Thue equations that we consider is a family of splitting type over the ring $\C[T]$, i.e., it has the form
\[
X(X-p_1Y) \cdots (X-p_{d-1}Y)+Y^d=\xi,
\]
where $p_i \in \C[T]$, for $i=1,\dots,d-1$, and $\xi \in \C^{\times}$.
Ziegler \cite{z} proved that such equations have only the trivial solutions
\[
\zeta (1,0), \zeta (0,1), \zeta (p_1,1),\dots, \zeta (p_{d-1},1),
\]
where $\zeta^d=\xi$, if some conditions on the degrees of the polynomials $p_1,\ldots,p_{d-1}$ are satisfied (namely $0<\deg (p_1)<\cdots<\deg (p_{d-1})$ and $1.031d!(d-1)(2d-3)4^{d-1}\deg (p_{d-2})$ $<\deg (p_{d-1})$).
For the equation in Theorem \ref{Tm1}, those conditions are not satisfied, so beside the trivial solutions we also have a non-trivial one, namely $(-\zeta,\zeta\lambda)$ above.

In order to prove Theorem \ref{Tm1} we partially follow the original ideas of Mason \cite{mas}.
In Section 2 we give a decomposition of $F(X,Y)$, and define the relevant ring and determine its unit group.
The decomposition used differs from the simple one suggested by Mason's method, making the relevant ring smaller, and thus making the calculation of the unit group easier.

The main point is that the Galois group of $X^4 - \lambda X^3 - X^2 + \lambda X+ 1$ over $\C[T]$ is special (i.e. not the full symmetric group) for every choice of $\lambda\in\C[T]\backslash\C$; more precisely, the form $X^4 - \lambda X^3 Y - X^2 Y^2 + \lambda X Y^3 + Y^4$ splits into the product of two quadratic forms over a quadratic extension of the ring $\C[T]$. This last extension corresponds to a hyperelliptic curve with two points at infinity (at least if $\deg(\lambda)>2$), and the curve's group of units has rank one over the constants.

In Section 3 we finish the proof of Theorem \ref{Tm1}.
Here we avoid the use of height bounds, hence the whole proof is very elementary.

The special properties of our result are twofold: Firstly, and maybe not so interestingly (or surprisingly), we completely solve an explicit family of degree four whereas before (except for the result with arbitrary degrees from \cite{z}) only families of degree three have been considered. Secondly, and more importantly, we use a completely different method by avoiding the use of heights and instead using the special structure of our equation to carry out the proof. This is in striking contrast to the previous results which all depended on the height (fundamental) inequality found by Mason (cf. \cite{mas1,mas}).

A remark about the notation: throughout the paper, if $A$ is a ring, then $A^2$ denotes the set $\{a^2 ; \, a \in A\}$. Moreover, in the whole paper one can take $\xi=1$, which - to simplify the presentation - we shall assume from this point on.

\section{Determining the unit group}

It is clear that the elements of $\mcalS$ are solutions of the equation $F(X,Y) = 1$. Observe that $F (-X,-Y)=F (X,Y)$ and $F(-Y,X)=F(X,Y)$. Now let $x,y \in \C[T]$ such that $F(x,y) = 1$.
First suppose that $x$ or $y$ is in $\C$.
If $y = 0$, then $x^4 = 1$, so $(x,y) \in \mcalS$.
If $y \in \C^{\times}$, then $x(x - y)(x + y)(x - \lambda y) = 1 - y^4 \in \C$.
The left hand side is either zero or it has positive degree.
Thus $y^4 = 1$ and $x \in \{0, \pm y, \lambda y\}$, so $(x,y) \in \mcalS$.
Suppose that $y \notin \C$ and $x \in \C$.
Then $y(y - x)(y + x)(y + \lambda x) = 1 - x^4 \in \C$, and $\deg(y) > 0$, so $x^4 = 1$ and $y = -\lambda x$, hence $(x,y) \in \mcalS$.
So if $x$ or $y$ is in $\C$, then $(x,y) \in \mcalS$.
We will show that if $x,y \in \C[T]$ and $f(x,y) \in \C^{\times}$, then $x$ or $y$ is in $\C$, thus proving the theorem.

Let $R = \C[T]$, $K = \C(T)$, $\mcalO = R[u]$ and $L = K(u)$, where $u = (\lambda + \sqrt{\lambda^2 - 4})/2$.
Suppose indirectly that $u \in K$.
Using that $R$ is integrally closed and $u^2 - \lambda u + 1 = 0$, we obtain $u \in R$.
Then $f = \sqrt{\lambda^2-4} = 2u - \lambda \in R$, so $4 = \lambda^2 - f^2 = (\lambda + f)(\lambda - f)$.  Hence $\lambda \pm f \in \C$ and therefore $\lambda \in \C$, which is a contradiction.
So $L = K(u) = K(\sqrt{\lambda^2-4})$ is a degree $2$ extension of $K$.
Using the equation $u^2 - \lambda u + 1 = 0$ we obtain that $\mcalO = R \oplus R u$, and that $\mcalO$ is integral over $R$.
The ring $R$ is integrally closed, so $\mcalO \cap K = R$ and $\mcalO^{\times} \cap K = R^{\times} = \C^{\times}$.
We have $u \in \mcalO^{\times}$, because $u^{-1} = (\lambda - \sqrt{\lambda^2 - 4})/{2} = \lambda - u \in \mcalO$.
The field extension $L/K$ is a Galois extension of degree $2$, with Galois group $\Gal(L/K) = \{1, \sigma\}$, where $\sigma(\sqrt{\lambda^2-4}) = -\sqrt{\lambda^2-4}$ and $\sigma(u) = u^{-1}$.
Thus $\sigma|_{\mcalO}$ is an automorphism of the ring $\mcalO$, hence $\sigma|_{\mcalO^{\times}}$ is an automorphism of the group $\mcalO^{\times}$.

In $\mcalO[X,Y]$ we have the decomposition
\[
F(X,Y) = (X^2 - u XY - Y^2)(X^2 - u^{-1} XY - Y^2).
\]
Now $x,y \in R$ and $F(x,y) \in \C^{\times} = R^{\times}$, so $x^2 - y^2 - xy u \in \mcalO^{\times}$.
The following crucial lemma determines the unit group $\mcalO^{\times}$.

\begin{lemma} \label{lemma:unit group}
$\mcalO^{\times} = \{c u^n ; \, c \in \C^{\times}, \, n \in \Z \}$.
\end{lemma}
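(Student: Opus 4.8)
The plan is to understand $\mcalO = R[u]$ geometrically: the affine curve $\operatorname{Spec}(\mcalO)$ is $\{(T,u) : u^2 - \lambda(T) u + 1 = 0\}$, which (for $\deg\lambda \geq 2$) is a hyperelliptic-type curve, and $\mcalO^\times$ is its group of regular invertible functions. Since $u$ and $u^{-1} = \lambda - u$ are already units, the inclusion $\{cu^n : c \in \C^\times, n \in \Z\} \subseteq \mcalO^\times$ is immediate; the work is the reverse inclusion. First I would set up the norm map $N = N_{L/K} \colon L \to K$, so that for $v \in \mcalO$ one has $N(v) = v \cdot \sigma(v) \in R$, and $v \in \mcalO^\times$ forces $N(v) \in R^\times = \C^\times$ (using $\sigma(\mcalO) = \mcalO$). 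Thus it suffices to show: if $v \in \mcalO$ and $N(v) \in \C^\times$, then $v = c u^n$ for some $c \in \C^\times$, $n \in \Z$.

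The key step is a valuation/degree argument at the two points at infinity. I would introduce the "degree in $T$" on $K = \C(T)$ and extend it to $L$; concretely, set $w(z) = -\deg$ of a rational function, giving a discrete valuation on $K$, and look at its extensions to $L$. Since $u$ satisfies $u^2 - \lambda u + 1 = 0$ with $\deg\lambda = d \geq 1$, the Newton polygon (or direct inspection: $u \sim \lambda$ or $u \sim 1/\lambda$ near $T = \infty$) shows the place at infinity splits into two places $\infty_1, \infty_2$ of $L$, with $v_{\infty_1}(u) = -d$, $v_{\infty_2}(u) = +d$ — these are the only places where $u$ has a zero or pole, since away from infinity $u$ is a unit in $\mcalO$ (as $u \cdot u^{-1} = 1$ in $\mcalO$ and both lie in $\mcalO$). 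Now take $v \in \mcalO^\times$. Then $v$ has no zeros or poles at finite places (it's a unit in $\mcalO$ and $\mcalO$ is the integral closure, or one argues directly that $N(v) \in \C^\times$ kills finite valuations), so its divisor is supported on $\{\infty_1, \infty_2\}$; writing $\operatorname{div}(v) = a\,\infty_1 + b\,\infty_2$ with $a + b = \deg\operatorname{div}(v) = 0$ (if $L$ has genus data making this a rational/hyperelliptic curve, the sum of a principal divisor's coefficients is $0$ once we also track the finite part — but here there is no finite part), we get $b = -a$, so $\operatorname{div}(v) = a(\infty_1 - \infty_2) = \operatorname{div}(u^{a/d})$ provided $d \mid a$. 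Then $v u^{-a/d}$ has trivial divisor on the smooth model, hence is constant, i.e.\ in $\C^\times$, finishing the proof — with $n = a/d$.

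The main obstacle is making the "divisor supported only at infinity, and its degree is zero" step rigorous and elementary, without invoking heavy machinery: I must (i) check $\mcalO$ is integrally closed (so that $\mcalO^\times$ really consists of functions with no finite zeros/poles — alternatively bypass this by the norm argument: if $v \in \mcalO$ with $v\sigma(v) \in \C^\times$ then at each finite place $v$ and $\sigma(v)$ have opposite-sign valuations summing to $0$, but $v \in \mcalO$ has valuation $\geq 0$ there, forcing both to be $0$), and (ii) justify $d \mid a$, i.e.\ that the valuation of any unit at $\infty_1$ is a multiple of $d = v_{\infty_1}(u)$. For (ii) the cleanest route is: $v u^{-\lceil a/d\rceil}$ or similar has a pole at only one of the two infinite places of order $< d$, but then its norm, or its minimal polynomial over $K$, would have bounded degree forcing it into $K$, and a unit of $\mcalO \cap K = R$ lying in $\C^\times$ must be constant — contradiction unless the order was $0$. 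I would carry out (i) and (ii) by explicit computation with the basis $\mcalO = R \oplus Ru$, using $\sigma(a + bu) = a + bu^{-1} = (a+b\lambda) - bu$ and $N(a+bu) = a^2 + ab\lambda + b^2$, reducing everything to elementary polynomial arithmetic in $R = \C[T]$.
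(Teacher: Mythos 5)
Your strategy is viable and genuinely different from the paper's. The paper never mentions places at infinity: it takes $\theta = a + bu \in \mcalO^{\times}$ outside the claimed group with $\deg(b)$ minimal, observes that $\theta u$ and $\theta u^{-1}$ are also outside it (so $\deg(a) \ge \deg(b)$ and $\deg(a+\lambda b)\ge\deg(b)$ by minimality of $\deg(b)$), and plays these inequalities off against the norm identity $a^2+\lambda ab+b^2\in\C^{\times}$ to force $\deg(a)=\deg(b)=\deg(a+\lambda b)$, whence $\deg(a^2+\lambda ab+b^2)=2\deg(a)+\deg(\lambda)>0$, a contradiction. You replace this descent by the divisor picture: a unit has divisor supported on $\{\infty_1,\infty_2\}$ of total degree zero, and one divides out by a power of $u$. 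Both arguments run on the same fuel (the norm form $a^2+\lambda ab+b^2$ plus a degree count); yours buys the conceptual explanation that the paper relegates to the remark after the lemma (rank-one unit group of a curve with two points at infinity), while the paper's buys brevity and never has to set up valuations on $L$. Note also that your degree-zero statement $a+b=0$ follows most cheaply from your own norm observation: $0=v_{\infty_1}(v\,\sigma(v))=v_{\infty_1}(v)+v_{\infty_2}(v)$ because $\sigma$ swaps the two infinite places; no appeal to the degree of a principal divisor is needed.

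The step you must still nail down is (ii), the divisibility $d\mid a$, and the justification you sketch does not work as written: the minimal polynomial of $w=vu^{-q}$ over $K$ has degree at most $2$ no matter what, so ``bounded degree of the minimal polynomial'' forces nothing; and nothing local forces divisibility either, since the place $\infty_1$ is split, not ramified, so the value group of $v_{\infty_1}$ on $L^{\times}$ is all of $\Z$. What does close the gap is the elementary polynomial arithmetic you promise at the end, applied to the \emph{coefficients} of that quadratic. Choose $q$ so that $r:=v_{\infty_1}(vu^{-q})\in[0,d)$ and set $w=vu^{-q}\in\mcalO^{\times}$; then $v_{\infty_2}(w)=-r$, and since $\sigma$ swaps $\infty_1,\infty_2$, the trace $w+\sigma(w)\in\mcalO\cap K=R$ satisfies $\deg(w+\sigma(w))\le r<d$, while the norm $w\,\sigma(w)\in\mcalO^{\times}\cap K=\C^{\times}$. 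Writing $w=s+t\sqrt{\lambda^2-4}$ with $s=\tfrac12(w+\sigma(w))$ and $t\in\tfrac12R$ (this uses $\mcalO=R\oplus Ru$), the norm relation $s^2-t^2(\lambda^2-4)=c\in\C^{\times}$ is impossible for $t\neq0$, because $\deg\bigl(t^2(\lambda^2-4)\bigr)\ge 2d$ while $\deg(s^2-c)\le 2r<2d$. Hence $t=0$, so $w\in R^{\times}=\C^{\times}$, $r=0$, and $v=cu^{q}$. With this paragraph inserted your proof is complete; without it, step (ii) is a genuine gap.
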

\begin{proof}
Let $U = \{c u^n ; \, c \in \C^{\times}, \, n \in \Z \}$.
Clearly $U \subseteq \mcalO^{\times}$.
Suppose indirectly that there is a $\theta \in \mcalO^{\times} \setminus U$.
We can write $\theta = a + b u$ for some $a,b \in R$.
If $a = 0$, then $\theta = b u \in \mcalO^{\times}$, hence $b \in \mcalO^{\times} \cap R = R^{\times} = \C^{\times}$, so $\theta \in U$, contradiction.
Therefore $a \neq 0$.
If $b = 0$, then $\theta = a \in \mcalO^{\times} \cap R = R^{\times} = \C^{\times} \subseteq U$, contradiction.
So $b \neq 0$.
Then $\deg (a), \deg (b) \in \Z_{\ge 0}$.
We can choose a $\theta \in \mcalO^{\times} \setminus U$ such that $\deg (b)$ is minimal.
Note that $\theta \in \mcalO^{\times}$, so $\sigma(\theta) = a + b u^{-1} = a + b(\lambda - u) = (a + b \lambda) - bu \in \mcalO^{\times}$, hence
\[
\theta \sigma(\theta) = (a+bu)(a+bu^{-1}) = a^2 + \lambda ab + b^2 \in \mcalO^{\times} \cap K = \C^{\times}.
\]
Using \[
(a+bu) u^{-1} = b + a(\lambda - u) = (b + \lambda a) - a u \in \mcalO^{\times} \setminus U
\]
and
\[
(a+bu) u = au + b(\lambda u - 1) = -b + (a + \lambda b) u \in \mcalO^{\times} \setminus U,
\]
we obtain $\deg(a) \ge \deg(b)$ and $\deg(a + \lambda b) \ge \deg(b)$.
If $\deg(a) > \deg(b)$ or $\deg(a + \lambda b) > \deg(b)$, then $\deg(a(a + \lambda b)) > \deg(b^2) \ge 0$, so
\[
0 = \deg(a^2 + \lambda ab + b^2) = \deg(a(a + \lambda b) + b^2) = \deg(a(a + \lambda b)) > 0,
\]
contradiction.
Thus $\deg(a) = \deg(b) = \deg(a + \lambda b)$. Then $\deg(a^2+ab\lambda+b^2)=2\deg a+\deg\lambda>0$, so the term $a^2+ab\lambda+b^2$ cannot belong to $\C^*$.
%Then $\deg(\lambda) + \deg(b) = \deg(\lambda b) = \deg((a+\lambda b) - a) \le \deg(b)$, hence $0 < \deg(\lambda) \le 0$, contradiction.
So indeed $\mcalO^{\times} = U$.
\end{proof}

\begin{remark}
Observe that the ring $\mcalO$ corresponds to the ring of regular functions on the hyperelliptic curve $S^2=\lambda(T)^2-4$ (at least if $\deg(\lambda)>2$ since, usually, only under this assumption is the curve called hyperelliptic), which has two places at infinity; the divisor of $u$ is supported at these places. The original equation is reduced to the equation $x^2-uxy-y^2=cu^m$, where $u\in\mcalO$ is an explicitly given rational function on the hyperelliptic curve (namely $u=(\lambda(T)+S)/2$), to be solved in $x,y\in\C[T]$.
\end{remark}

\begin{remark}
We could further decompose $F(X,Y)$ as
\[
F(X,Y) = (X - \alpha_1 Y)(X - \alpha_2 Y)(X - \alpha_3 Y)(X - \alpha_4 Y),
\]
where $\alpha_1, \dotsc, \alpha_4$ are elements of a fixed algebraic closure $\overline{K}$ of $K$.
So if $F(x,y) \in \C^{\times}$ for some $x,y \in R$, then $x-\alpha_1 y \in R[\alpha_1]^{\times}$.
Then one would proceed by determining the structure of the unit group $R[\alpha_1]^{\times}$.
However this is probably more difficult to calculate than $\mcalO^{\times} = R[u]^{\times}$, because $K(u)/K$ is a degree $2$ Galois extension, while $K(\alpha_1)/K$ is a degree $4$ non-Galois extension.
\end{remark}

\section{Finishing the proof of Theorem \ref{Tm1}}

Using Lemma \ref{lemma:unit group} we get that $x^2 - y^2 - xyu = c u^m$ for some $c \in \C^{\times}$ and $m \in \Z$.
After multiplying $x,y$ by a nonzero scalar, we may assume that $c = 1$.
Following Mason's approach, one could try to get an upper bound for $|m|$ using height bounds.
Then one could check the finitely many remaining cases one by one.
Here instead we only do the second step, but for a general $m$, which allows us to omit the height bounds.

For every $n \in \Z$ there are unique $A_n, B_n \in R$ such that $u^n = A_n + B_n u$.
Then $x^2-y^2 = A_m$ and $xy = -B_m$.
So $x^2 y^2 = B_m^2$ and
\[
(x^2+y^2)^2 = (x^2-y^2)^2 + 4 x^2 y^2 = A_m^2 + 4B_m^2 = (A_m + 2i B_m)(A_m - 2i B_m) \in R^2.
\]
If $n \in \Z$, then $u^{n+1} = (A_n + B_n u)u = A_n u + B_n(\lambda u - 1) = -B_n + (A_n + \lambda B_n)u$, so $A_{n+1} = -B_n$ and $B_{n+1} = A_n + \lambda B_n$.
In matrix notation $A_n$ and $B_n$ satisfy therefore the following recurrence relation: \[\left(\begin{array}{c}A_{n+1}\\B_{n+1}\end{array}\right)=\left(\begin{array}{cc}0&-1\\1&\lambda\end{array}\right)\left(\begin{array}{c}A_n\\B_n\end{array}\right).\]
The characteristic roots are precisely given by $u,u^{-1}$.
Now clearly, $A_n$ and $B_n$ are polynomials in $\lambda$.
We introduce the sequences $(U_n)_{n \in \Z}$ and $(V_n)_{n \in \Z}$ in $\Z[X]$, defined by the following recursion:
$U_0 = 1$, $V_0 = 0$, and $U_{n+1} = -V_n$ and $V_{n+1} = U_n + X V_n$ for every $n \in \Z$.
Furthermore, let $G_n = U_n + 2i V_n \in \C[X]$ and $\overline{G}_n = U_n - 2i V_n \in \C[X]$ for every $n \in \Z$.
Then $A_n = U_n(\lambda)$ and $B_n = V_n(\lambda)$ for every $n \in \Z$, hence $(U_m^2+4V_m^2)(\lambda) = (G_m \overline{G}_m)(\lambda) \in R^2$.
The following lemma is very useful in this situation.

\begin{lemma} \label{lemma:f(lambda) square => f square}
Let $f \in \C[X] \setminus \{0\}$ and $\lambda \in \C[T] \setminus \C$.
If $\deg(f)$ is even and $f(\lambda) \in \C(T)^2$, then $f \in \C[X]^2$.
\end{lemma}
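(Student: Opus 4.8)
The plan is to compare the prime factorization of $f$ over the algebraically closed field $\C$ with that of $f(\lambda)$ over the UFD $\C[T]$, and to exploit a ramification constraint: since $\lambda\notin\C$, only very few shifts $\lambda-a$ can be constant multiples of perfect squares.

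First I would dispose of the case $\deg(f)=0$ (a nonzero constant is a square in $\C[X]$), and note that $f(\lambda)\in\C[T]$, so the hypothesis $f(\lambda)\in\C(T)^2$ already forces $f(\lambda)\in\C[T]^2$ because $\C[T]$ is integrally closed in $\C(T)$. Write $f=c\prod_{i=1}^{k}(X-a_i)^{e_i}$ with $c\in\C^\times$ and pairwise distinct $a_i\in\C$, so that $f(\lambda)=c\prod_{i=1}^{k}(\lambda-a_i)^{e_i}$. The polynomials $\lambda-a_i$ are pairwise coprime in $\C[T]$ (as $\lambda-a_i$ and $\lambda-a_j$ differ by the nonzero constant $a_j-a_i$). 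Hence, for each $i$ and each zero $t_0$ of $\lambda-a_i$, the multiplicity of $T-t_0$ in the factorization of $f(\lambda)$ equals $e_i$ times the multiplicity of $t_0$ as a zero of $\lambda-a_i$; since $f(\lambda)$ is a square in $\C[T]$, this product is even.

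Now set $S=\{\,i : e_i\text{ is odd}\,\}$. Because $\sum_i e_i=\deg(f)$ is even, $|S|$ is even. For $i\in S$ the observation above shows that \emph{every} zero of $\lambda-a_i$ has even multiplicity, so $\lambda-a_i=c_ip_i^2$ for some $c_i\in\C^\times$ and $p_i\in\C[T]$; in particular $\deg(\lambda)$ is even and $\deg(p_i)=\deg(\lambda)/2$. Differentiating gives $\lambda'=2c_ip_ip_i'$, so $p_i\mid\lambda'$, where $\lambda'\neq0$ since $\lambda\notin\C$. If $i,j\in S$ with $i\neq j$, then $p_i$ and $p_j$ are coprime and both divide $\lambda'$, forcing $\deg(\lambda)=\deg(p_ip_j)\le\deg(\lambda')=\deg(\lambda)-1$, a contradiction. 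Thus $|S|\le1$; combined with $|S|$ even, $S=\emptyset$. Consequently all $e_i$ are even, and since $c$ is a square in $\C$ we conclude $f=\bigl(\sqrt{c}\,\prod_i(X-a_i)^{e_i/2}\bigr)^2\in\C[X]^2$.

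The step I expect to be the crux is isolating, for each $i\in S$, the shift $\lambda-a_i$ as a constant times a square and then bounding the number of such shifts by one via the derivative/degree argument; this is precisely where the parity of $\deg(f)$ is used, and the example $\lambda=T^2$ with $f=X$ shows that the parity hypothesis cannot be dropped.
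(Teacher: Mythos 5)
Your proof is correct, and its skeleton matches the paper's: factor $f$ over $\C$, use pairwise coprimality of the shifts $\lambda-a_i$ in the UFD $\C[T]$ to isolate the odd-exponent factors, conclude that each such shift is (a constant times) a square in $\C[T]$, and then show there cannot be two such shifts, so that parity of $\deg(f)$ forces there to be none. The one genuine difference is the final contradiction. The paper writes $\lambda-\gamma_1=h_1^2$, $\lambda-\gamma_2=h_2^2$ and factors $(h_1-h_2)(h_1+h_2)=\gamma_2-\gamma_1\in\C^{\times}$, forcing $h_1,h_2\in\C$ and hence $\lambda\in\C$; you instead differentiate $\lambda-a_i=c_ip_i^2$ to get $p_i\mid\lambda'$ and derive the contradiction $\deg(\lambda)\le\deg(\lambda)-1$ from coprimality and degree counting. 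Both are valid in characteristic $0$ (you correctly note $\lambda'\neq0$); the paper's trick is marginally more elementary in that it avoids derivatives, while your derivative argument has the small advantage of handling the unit factors $c_i$ without first normalizing $f$ to be monic, and it isolates the reusable fact that at most one shift $\lambda-a$ can be a constant times a square. Your reduction of $f(\lambda)\in\C(T)^2$ to $f(\lambda)\in\C[T]^2$ via integral closedness is also a clean alternative to the paper's UFD argument.
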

\begin{proof}
Note that $\C[T]$ is a unique factorization domain such that $\C[T]^{\times} = \C^{\times} \subseteq \C[T]^2$.
So $\C[T] \cap \C(T)^2 = \C[T]^2$, and element of $\C[T] \setminus \{0\}$ is in $\C(T)^2$ if and only if every prime factor has even multiplicity in it.
We can multiply $f$ by a scalar, so we may assume that $f$ is monic.
Then we have a factorization $f(X) = \prod_{j=1}^r (X - \gamma_j)^{m_j}$, where $\gamma_1, \dotsc, \gamma_r$ are pairwise distinct complex numbers, and $m_1, \dotsc, m_r$ are positive integers such that $\sum_{j=1}^r m_j = \deg(f)$ is even.
Let $s$ be the number of $j$'s such that $m_j$ is odd.
Then $0 \le s \le r$ and $s$ is even.
Suppose indirectly that $s \neq 0$.
Then $s \ge 2$.
After reindexing the zeros, we may assume that $m_1, \dotsc, m_s$ are odd and $m_{s+1}, \dotsc, m_r$ are even.
From $f(\lambda) = \prod_{j=1}^r (\lambda - \gamma_j)^{m_j} \in \C[T]^2$ we obtain $\prod_{j=1}^s (\lambda - \gamma_j) \in \C[T]^2$.
If $j \neq j'$, then $\lambda - \gamma_j$ and $\lambda - \gamma_{j'}$ are coprime (because their difference is in $\C^{\times}$).
So $\lambda - \gamma_j \in \C[T]^2$ for every $j \in \{1, \dotsc, s\}$.
Since $s \ge 2$, we get that $\lambda - \gamma_1 = h_1^2$ and $\lambda - \gamma_2 = h_2^2$ for some $h_1, h_2 \in \C[T]$.
Then $(h_1 - h_2)(h_1 + h_2) = h_1^2 - h_2^2 = \gamma_2 - \gamma_1 \in \C^{\times}$, so $h_1 \pm h_2 \in \C^{\times}$. Hence $h_1, h_2 \in \C$ and therefore $\lambda = \gamma_1 + h_1^2 \in \C$, contradiction.
So $s = 0$, hence $f \in \C[X]^2$.
\end{proof}

One can easily show by induction that $\deg(V_n) = |n|-1$ if $n \neq 0$, so $V_n \neq 0$ for $n \neq 0$.
Using this and $U_0 = 1 \neq 0$, we get that $G_n, \overline{G}_n \neq 0$ and $\deg(G_n) = \deg(\overline{G}_n)$ for every $n \in \Z$, hence $U_n^2 + 4 V_n^2 = G_n \overline{G}_n \neq 0$ and $\deg(U_n^2 + 4V_n^2) = 2 \deg(G_n)$.
Then $G_m \overline{G}_m = U_m^2 + 4V_m^2 \in \C[X]^2$ by Lemma \ref{lemma:f(lambda) square => f square}.
From the recursion it is clear that $\gcd(U_n,V_n) = \gcd(U_{n+1}, V_{n+1})$ in $\C[X]$ for every $n \in \Z$, hence $\gcd(U_n,V_n) = \gcd(U_0, V_0) = 1$.
Then $\gcd(G_n, \overline{G}_n) = 1$ too in $\C[X]$, because $U_n = \frac{1}{2}(G_n + \overline{G}_n)$ and $V_n = \frac{1}{4i}(G_n-\overline{G}_n)$.
So $G_m, \overline{G}_m \in \C[X]^2$.

The sequences $(U_n)_{n \in \Z}$, $(V_n)_{n \in \Z}$, $(G_n)_{n \in \Z}$ satisfy the following recursions: $U_{n+2} = X U_{n+1} - U_n$, $V_{n+2} = X V_{n+1} - V_n$, $G_{n+2} = X G_{n+1} - G_n$.
Let
\[
H_n(X) = (-i)^n G_n(iX) \in \C[X],
\]
then $H_0 = 1$, $H_1 = 2$, and $H_{n+2} = X H_{n+1} + H_n$ for every $n \in \Z$.
So $H_n \in \Z[X]$ for every $n \in \Z$, and $H_m \in \C[X]^2$.

Here is a table of $U_n$, $V_n$ and $H_n$ for small values of $n$.
\smallskip
\begin{center}
\scalebox{0.9}{
$\begin{array}{c|c|c|c|c|c|c|c}
n & -3 & -2 & -1 & 0 & 1 & 2 & 3 \\
\hline
U_n & X^3 - 2X & X^2 - 1 & X & 1 & 0 & -1 & -X \\
V_n & -X^2 + 1 & -X & -1 & 0 & 1 & X & X^2-1 \\
H_n & -X^3 + 2X^2 - 2X + 2 & X^2 - 2X + 1 & -X+2 & 1 & 2 & 2X+1 & 2X^2+X+2
\end{array}$}
\end{center}
\smallskip
One can easily show that $H_m \in \C[X]^2 \cap \Z[X]$ implies that $H_m = C h^2$ for some $C \in \Z \setminus \{0\}$ and $h \in \Z[X]$.
We will show that $m \in \{-2, 0, 1\}$.
Suppose indirectly that $m \notin \{-2, 0, 1\}$.
If $n \ge 2$, then $\deg(H_n) = n-1$, and the coefficient of $X^{n-2}$ in $H_n$ is $1$.
If $n \le 0$, then $\deg(H_n) = -n$, and the leading coefficient of $H_n$ is $(-1)^n$.
So $H_n$ is primitive for every $n \in \Z \setminus \{1\}$.
We have assumed that $m \neq 1$, so $H_m = C h^2$ is primitive, hence $C = \pm 1$.
If $m \ge 2$, then the leading coefficient of $H_m = \pm h^2$ is $2$, which is impossible, since $\sqrt{\pm 2} \notin \Z$.
Since $H_{-1} = -X+2 \notin \C[X]^2$, we have $m \neq -1$.
So $m \le -3$.
Then $\deg(H_m) = -m$, so $m$ must be even.
Then $H_m = C h^2$ is monic, so $C = 1$.
One can easily show by induction that $H_{-n}(1) = (-1)^{n+1} F_{n-2}$ for every $n \ge 2$, where $F_l$ denotes the $l$th Fibonacci number.
Note that $F_l > 0$ for $l > 0$, and $-m-2 > 0$, hence $h(1)^2 = H_m(1) = -F_{-m-2} < 0$, contradiction.
So indeed $m \in \{-2, 0, 1\}$.

If $m \in \{0, 1\}$, then $xy = -V_m(\lambda) \in \C$, so $x$ or $y$ is in $\C$.
If $m = -2$, then $x^2-y^2 = U_{-2}(\lambda) = \lambda^2 - 1$ and $xy = -V_{-2}(\lambda) = \lambda$.
Then $x^2-y^2 = x^2y^2 - 1$, so $(x^2+1)(y^2-1) = 0$, hence $x$ or $y$ is in $\C$.
So we have proved that if $F(x,y) = 0$, then $x$ or $y$ is in $\C$, and therefore $(x,y) \in \mcalS$.

\section{Acknowledgements}

The first and third author were supported by a grant of the Austrian Science Fund (FWF): P24574-N26, the second author was supported by the University of Rijeka research grant no. 13.14.1.2.02 and by Croatian Science Foundation under the project no. 6422. The authors are grateful to an anonymous referee for pointing out the geometric situation lying behind our line of reasoning.

\end{document}